\documentclass[11pt]{amsart}

\usepackage[T1]{fontenc}
\usepackage[utf8]{inputenc}
\usepackage{lmodern}
\usepackage[a4paper,margin=1.05in]{geometry}
\usepackage{amsmath,amssymb,amsthm,mathtools}
\usepackage{microtype}
\usepackage{enumitem}
\usepackage[colorlinks=true,linkcolor=blue,citecolor=blue,urlcolor=blue]{hyperref}
\hypersetup{
  pdftitle={Pseudo-Hyperjump Inversion Fails for Turing Degrees},
  pdfauthor={Patrizio Cintioli},
  pdfsubject={Computability theory; hyperjump inversion},
  pdfkeywords={Pseudo-hyperjump, hyperjump, Turing degrees,
pseudo-hyperjump inversion, hyperarithmetic reducibility, Kleene's O}
}

\newtheorem{theorem}{Theorem}[section]
\newtheorem{lemma}[theorem]{Lemma}
\newtheorem{corollary}[theorem]{Corollary}
\theoremstyle{definition}
\newtheorem{definition}[theorem]{Definition}
\newtheorem{remark}[theorem]{Remark}

\newcommand{\N}{\mathbb{N}}
\newcommand{\OO}{\mathcal{O}}
\newcommand{\HJ}{\operatorname{HJ}}
\newcommand{\degT}{\operatorname{deg}_{\mathrm T}}
\newcommand{\leT}{\leq_{\mathrm T}}
\newcommand{\nleT}{\nleq_{\mathrm T}}
\newcommand{\geT}{\geq_{\mathrm T}}
\newcommand{\ltT}{<_{\mathrm T}}
\newcommand{\eqT}{\equiv_{\mathrm T}}
\newcommand{\leH}{\leq_{\mathrm{HYP}}}
\newcommand{\ltH}{<_{\mathrm{HYP}}}
\newcommand{\nleH}{\nleq_{\mathrm{HYP}}}
\newcommand{\join}{\mathbin{\oplus}}
\newcommand{\CK}{\mathrm{CK}}

\title[Pseudo-Hyperjump Inversion]
      {Pseudo-Hyperjump Inversion Fails for Turing Degrees}
\author[P. Cintioli]{Patrizio Cintioli}
\address{Mathematics Division, School of Science and Technology, University of Camerino, Italy}
\email{patrizio.cintioli@unicam.it}

\subjclass[2020]{Primary 03D28; Secondary 03D55}
\keywords{Pseudo-hyperjump, hyperjump, Turing degrees, pseudo-hyperjump inversion,
hyperarithmetic reducibility, Kleene's $\OO$}

\begin{document}

\begin{abstract}
Jananthan and Simpson asked in Conjecture~4.5 whether every
pseudo-hyperjump
$\HJ_e(X)=X\join V_e^X$, with $V_e^X$ uniformly $\Pi^{1,X}_1$,
admits Turing-degree inversion above Kleene's $\OO$.
We settle their conjecture in the negative.  We construct one index
$e_*$ such that $X\ltT\HJ_{e_*}(X)$ for every real $X$, while the
range of $\HJ_{e_*}$ omits every Turing degree in the interval
\[
  \{\mathbf d:
    \degT(\OO)\leq \mathbf d<\degT(\OO^{\OO})\}.
\]
Thus Conjecture~4.5 fails even for an operator which strictly raises
the Turing degree of every input.  Each of Properties~4.6--4.8
implies Conjecture~4.5; consequently, the accompanying
class-characterisation problems collapse: no class of reals, even
without a definability assumption, satisfies any one of these
properties.
\end{abstract}

\maketitle

\section{Introduction}

We identify reals with subsets of $\N$ (equivalently, with elements of
$2^\N$), and use $\leT$, $\leH$, and $\leq_m$ for Turing,
hyperarithmetical, and many-one reducibility, respectively.
The corresponding strict and equivalence relations are denoted in the
usual way.  The Turing jump of $X$ is denoted by $X'$.
The ordinal $\omega_1^X$ is the least ordinal which is not the order
type of an $X$-recursive well-ordering, and
$\omega_1^{\CK}=\omega_1^{\emptyset}$ is the Church--Kleene ordinal,
the least noncomputable ordinal.
All analytical pointclasses are lightface, with any oracle parameter
indicated explicitly.  We write $\degT(X)$ for the Turing degree of
$X$, and $X\join Y$ for the usual effective join of two reals.

For a real $X$, let $\OO^X$ denote the relativized Kleene
$\OO$, the hyperjump of $X$.  It is $\Pi^{1,X}_1$-complete, uniformly
in $X$.  Equivalently, its Turing degree is that of the set of indices
of $X$-recursive well-orderings.  We put $\OO=\OO^{\emptyset}$.

Let $(V_e^X)_{e\in\N}$ be a standard effective
enumeration, uniform in $X$, of the $\Pi^{1,X}_1$ subsets of $\N$.
Jananthan and Simpson \cite{JananthanSimpson} define
\[
  \HJ_e(X)=X\join V_e^X
\]
and ask whether the following analogue of the pseudojump inversion
theorem holds \cite[Conjecture 4.5]{JananthanSimpson}:
\[
  \forall e\;\forall A\geT\OO\;\exists B\quad
  A\eqT\HJ_e(B)\eqT B\join\OO.
\]
Here and below $A\geT\OO$ abbreviates $\OO\leT A$.
We settle Conjecture~4.5 in the negative.  More precisely, we
construct one index $e_*$ whose pseudo-hyperjump strictly raises
every input degree but omits every degree in
\[
  \{\mathbf d:
    \degT(\OO)\leq\mathbf d<\degT(\OO^\OO)\}.
\]
Immediately after stating Conjecture~4.5, Jananthan and Simpson
observe that even an affirmative answer would leave open the
characterisation of the $\Sigma^1_1$ classes satisfying their
Properties~4.6--4.8.  Our negative answer has the opposite
consequence: as we observe in Section~3, each of Properties~4.6--4.8
implies Conjecture~4.5, and hence none of these properties is
satisfied by any class.

The construction uses the change in behaviour at the predicate
$\omega_1^X=\omega_1^{\CK}$.  The class of reals satisfying this
predicate is $\Sigma^1_1$; therefore its complement can be used as a
$\Pi^1_1$ switch inside a single pseudo-hyperjump.
References to the conjecture and numbered properties of
\cite{JananthanSimpson} use arXiv:2101.08818v2 (19 July 2024).
The inversion question already appears in Simpson's 2019 NUS lecture,
reporting joint work with Jananthan \cite[slide 9]{SimpsonNUS}.

We use the following standard facts about the hyperjump:
\begin{enumerate}[label=(\roman*)]
  \item $X\ltH\OO^X$; this follows from the completeness of
        $\OO^X$ and \cite[Theorem II.7.6(i), p.~51]{Sacks}.
  \item If $Y\leH X$, then $\OO^Y\leT\OO^X$;
        in fact, $\OO^Y\leq_m\OO^X$
        \cite[Proposition II.7.1, p.~49]{Sacks}.
  \item $\omega_1^X>\omega_1^{\CK}$ if and only if
        $\OO\leH X$
        \cite[Corollary II.7.7, p.~51]{Sacks}.
\end{enumerate}
The uniform completeness of the usual hyperjump also gives
$X\join X'\leT\OO^X$ uniformly in $X$
\cite[Subsection II.5.4, p.~43]{Sacks}.

\section{A pseudo-hyperjump with a gap}

The construction uses two columns with different roles.  The even
column uniformly codes $X'$, ensuring strict Turing-degree increase,
while the odd column codes $\OO^X$ precisely when
$\omega_1^X>\omega_1^{\CK}$.  We first verify that the switch between
these two cases has the required effective descriptive complexity.

\begin{lemma}\label{lem:complexity}
The class
\[
  \mathcal L=\{X:\omega_1^X=\omega_1^{\CK}\}
\]
is $\Sigma^1_1$
\cite[Corollary III.1.5, p.~54]{Sacks}.
Consequently, the predicate
\[
  \omega_1^X>\omega_1^{\CK}
\]
is a $\Pi^1_1$ predicate of $X$.
\end{lemma}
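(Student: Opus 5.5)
Our plan is to prove the $\Sigma^1_1$ description of $\mathcal L$ directly, without treating the cited corollary of Sacks as a black box; the second assertion then follows by taking complements. The starting point is that $\omega_1^X\geq\omega_1^{\CK}$ for every $X$, since every recursive well-ordering is $X$-recursive. Consequently $\omega_1^X=\omega_1^{\CK}$ holds if and only if $\omega_1^X\leq\omega_1^{\CK}$. By fact (iii), this is equivalent to $\OO\nleH X$. The key step is therefore to show that the condition $\OO\nleH X$ is $\Sigma^1_1$ in $X$.

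For this we use the Spector--Gandy style characterisation. The condition $\omega_1^X\leq\omega_1^{\CK}$ says that for every $e$, if $\Phi_e^X$ codes a linear order, then either that order is ill-founded or it is isomorphic to an initial segment of some recursive well-ordering. We plan to rewrite this as a statement of the form $\exists H$: ``$H$ is a hyperarithmetic-in-$X$-style witness''. It is cleaner, however, to use the Kleene--Harrison boundedness phenomenon. Let $W$ be the $\Sigma^1_1$ class of pairs $(X,e)$ for which $\Phi_e^X$ is a linear order that is ill-founded or embeds into a recursive well-order; the latter clause is $\Sigma^1_1$ because it asserts the existence of an order-preserving map into some $\Phi_a$ with $a\in\OO$.

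The obstacle is the quantifier ``$a\in\OO$'', which is $\Pi^1_1$. To avoid it, we replace ``embeds into a recursive well-order'' by ``embeds into the Harrison linear order $H$'', a recursive ill-founded order whose well-founded part has type $\omega_1^{\CK}$. Under this replacement we obtain the following formula: for all $e$, either $\Phi_e^X$ is not a linear order (arithmetical), or $\Phi_e^X$ is ill-founded ($\Sigma^1_1$), or there exists an embedding of $\Phi_e^X$ onto a proper initial segment of $H$ bounded by some element $h$ such that the part of $H$ below $h$ is well-founded.

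That last condition again reintroduces $\Pi^1_1$. The fix is the standard argument that a well-order embedding into $H$ as an initial segment must land in the well-founded part, because initial segments of $H$ that are images of well-orders are well-ordered. So it suffices to say: ``there is an embedding $f$ of $\Phi_e^X$ onto an initial segment of $H$ which is not all of $H$''. Onto-initial-segment is arithmetical in $f$, and the existential quantifier over $f$ is $\Sigma^1_1$. For a well-ordering of type $\geq\omega_1^{\CK}$ no such $f$ exists, since its image would be a well-ordered initial segment of $H$ of type $\geq\omega_1^{\CK}$, impossible. Conversely, every recursive-type well-ordering embeds as a proper initial segment of $H$. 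The universal number quantifier over $e$ preserves $\Sigma^1_1$. This yields a $\Sigma^1_1$ definition of $\mathcal L$, and the predicate $\omega_1^X>\omega_1^{\CK}$, being its complement, is $\Pi^1_1$. We expect the main care to be needed in the Harrison-order step, which is exactly the content of Sacks' Corollary III.1.5.
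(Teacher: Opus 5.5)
Your formula does define $\mathcal L$, but the step you flagged as needing the most care is justified incorrectly. You claim that for a well-ordering of type $\geq\omega_1^{\CK}$ there is no isomorphism $f$ onto a proper initial segment of the Harrison order $H$, because its image would be a well-ordered initial segment of type $\geq\omega_1^{\CK}$. That fails at type exactly $\omega_1^{\CK}$. The well-founded part of $H$ is itself an initial segment of type $\omega_1^{\CK}$, and it is proper because $H$ is ill-founded. So the natural witness when $\omega_1^X>\omega_1^{\CK}$, an $X$-recursive copy of $\omega_1^{\CK}$, \emph{does} satisfy your clause.

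What is true is that a well-ordered initial segment of $H$ lies inside the well-founded part, so it has type at most $\omega_1^{\CK}$. The repair is to use a different witness when $\omega_1^X>\omega_1^{\CK}$: take an $X$-recursive copy of $\omega_1^{\CK}$ and add a top point, giving an $X$-recursive well-ordering of type $\omega_1^{\CK}+1$. No initial segment of $H$ has that type, so your clause fails for that index $e$, and $X$ falls outside your $\Sigma^1_1$ class. With this change the argument is correct, though it still relies on the existence of the Harrison ordering.

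The paper takes a lighter route. It asks that every $X$-recursive well-ordering be \emph{isomorphic} to some recursive linear ordering $R_i$, rather than embedded into a recursive well-ordering or mapped onto an initial segment of one. Well-foundedness of $R_i$ then follows from the isomorphism, so the $\Pi^1_1$ condition $a\in\OO$ that worried you never arises. The witness of type exactly $\omega_1^{\CK}$ works directly, because it has no recursive copy at all.

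Your version replaces the quantifier over $i$ by a single fixed recursive order, but pays for it with Harrison's theorem and the boundary issue above. Two smaller points: the detour through fact (iii) and $\OO\nleH X$ in your first paragraph is never used. You also call $W$ a $\Sigma^1_1$ class and then, in the next sentence, note that the clause defining it contains the $\Pi^1_1$ condition $a\in\OO$, which is inconsistent.
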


\begin{proof}
Let $(\varphi_e^X)_{e\in\N}$ be a uniform effective enumeration
of the partial $X$-recursive functions from $\N^2$ to $\{0,1\}$,
viewed as putative characteristic functions of strict linear
orderings on $\N$. Write $R_e^X$ for the corresponding candidate
presentation, and let $R_i=R_i^\emptyset$.

Since $\omega_1^{\CK}\leq\omega_1^X$ for every $X$, we have
$\omega_1^X=\omega_1^{\CK}$ if and only if, for every $e$, either
$R_e^X$ does not present a well-ordering or there are $i$ and a real
$f$ such that $R_i$ is a total linear ordering and $f$ is an
isomorphism from $R_i$ onto $R_e^X$.
Indeed, in the equality case every $X$-recursive well-ordering has
recursive order type.  If $\omega_1^X>\omega_1^{\CK}$, an
$X$-recursive presentation of $\omega_1^{\CK}$ has no recursive copy.

Failure to present a well-ordering is a $\Sigma^1_1(X)$ condition:
failure to be a total linear ordering is arithmetical in $X$, while
ill-foundedness is witnessed by an infinite descending sequence.
The existence of $i$ and of the isomorphism $f$ is also
$\Sigma^1_1(X)$.  Finally, the universal number quantifier over $e$
can be absorbed into a single existentially quantified real coding
a sequence of tagged witnesses.  The resulting definition is
uniform and effective, so the class is $\Sigma^1_1$.
\end{proof}

\begin{definition}\label{def:Vstar}
Uniformly in $X$, define $V_*^X\subseteq\N$ by
\begin{align*}
  2n\in V_*^X
  &\iff n\in X',\\
  2n+1\in V_*^X
  &\iff \omega_1^X>\omega_1^{\CK}\ \&\ n\in\OO^X.
\end{align*}
\end{definition}

The first clause is arithmetical in $X$, and hence can be written as a
$\Pi^{1,X}_1$ predicate. By Lemma~\ref{lem:complexity}, the second clause
is the conjunction of two $\Pi^{1,X}_1$ predicates.
Hence, using the arithmetical parity distinction, $V_*^X$ is
uniformly $\Pi^{1,X}_1$.
Fix an index $e_*$ such that $V_{e_*}^X=V_*^X$ for every real $X$.

\begin{theorem}[Gap theorem]\label{thm:gap}
For every real $X$,
\[
  X\ltT\HJ_{e_*}(X).
\]
Moreover, the following dichotomy holds:
\[
\begin{array}{ll}
\omega_1^X=\omega_1^{\CK}
&\Longrightarrow
\HJ_{e_*}(X)\eqT X'
\text{ and }
\OO\nleT\HJ_{e_*}(X),\\[1mm]
\omega_1^X>\omega_1^{\CK}
&\Longrightarrow
\HJ_{e_*}(X)\eqT\OO^X
\text{ and }
\OO^\OO\leT\HJ_{e_*}(X).
\end{array}
\]

Consequently,
\[
  \{\degT(\HJ_{e_*}(X)):X\in 2^\N\}
  \cap
  \{\mathbf d:
     \degT(\OO)\leq\mathbf d<\degT(\OO^\OO)\}
  =\varnothing.
\]
\end{theorem}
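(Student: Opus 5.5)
Since $\HJ_{e_*}(X)=X\join V_*^X$ by definition, I will compute $V_*^X$ in each of the two cases of the dichotomy, using Definition~\ref{def:Vstar} and facts (i)--(iii).

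\emph{Case $\omega_1^X=\omega_1^{\CK}$.} The odd column of $V_*^X$ is empty, so $V_*^X$ is the even column, which is a copy of $X'$. Hence $\HJ_{e_*}(X)\eqT X\join X'\eqT X'$, and $X\ltT X'$ gives the strict increase. To get $\OO\nleT\HJ_{e_*}(X)$, note that $X'\leH X$. If $\OO\leT X'$ held, then $\OO\leH X$, and fact (iii) would give $\omega_1^X>\omega_1^{\CK}$, contradicting the case assumption.

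\emph{Case $\omega_1^X>\omega_1^{\CK}$.} Now the odd column is a copy of $\OO^X$ and the even column is a copy of $X'$. Since $X\join X'\leT\OO^X$, we get $\HJ_{e_*}(X)\eqT\OO^X$. For the strict increase, fact (i) gives $X\ltH\OO^X$, so in particular $\OO^X\nleT X$. By fact (iii) we have $\OO\leH X$. Fact (ii), applied with $Y=\OO$, then yields $\OO^\OO\leT\OO^X\eqT\HJ_{e_*}(X)$.

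\emph{The gap.} Let $\mathbf d=\degT(\HJ_{e_*}(X))$. In the first case $\degT(\OO)\not\leq\mathbf d$. In the second case $\degT(\OO^\OO)\leq\mathbf d$, so $\mathbf d<\degT(\OO^\OO)$ fails. Either way $\mathbf d$ lies outside the interval.

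All the real work is in the external facts (i)--(iii) and in the definability of $V_*^X$, which the paper has already established. The only point needing care is the second case, where one must invoke the right direction of fact (iii), namely $\OO\leH X$ rather than $\OO\leT X$, and then fact (ii) with $Y=\OO$. I do not expect a genuine obstacle.
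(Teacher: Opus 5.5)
Your proof is correct and follows the paper's argument essentially step for step: you use the same case split on $\omega_1^X$, the same appeal to Spector's criterion (fact (iii)), and the same application of fact (ii) with $Y=\OO$. The only difference is that you prove strictness separately in each case, using fact (i) in the second case, whereas the paper gets it uniformly from $X\ltT X'\leT\HJ_{e_*}(X)$ via the even column, which is exactly the role that column is designed to play.
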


\begin{proof}
The even column gives $X'\leT\HJ_{e_*}(X)$ for every $X$.
Thus $X\ltT X'\leT\HJ_{e_*}(X)$, proving strictness.

Suppose first that $\omega_1^X=\omega_1^{\CK}$.  The odd column in
Definition~\ref{def:Vstar} is empty, so
\[
  \HJ_{e_*}(X)=X\join V_*^X\eqT X'.
\]
Spector's criterion gives $\OO\nleH X$.  Since $X'\leH X$,
we have $\OO\nleT X'$; otherwise $\OO\leT X'\leH X$,
a contradiction.  This proves the first alternative.

Now suppose that $\omega_1^X>\omega_1^{\CK}$.  The odd column of
$V_*^X$ is a copy of $\OO^X$, which computes both $X$ and $X'$;
thus $\HJ_{e_*}(X)\eqT\OO^X$.
Spector's criterion gives $\OO\leH X$.  Applying the implication
$Y\leH X \Longrightarrow \OO^Y\leT\OO^X$ with $Y=\OO$, we obtain
\[
  \OO^{\OO}\leT\OO^X\eqT\HJ_{e_*}(X).
\]
The two alternatives show that the stated degree interval is omitted.
\end{proof}

\begin{corollary}\label{cor:conjecture-false}
For the index $e_*$, if
\[
  \OO\leT A\ltT\OO^\OO,
\]
then there is no real $B$ such that
\[
  A\eqT\HJ_{e_*}(B).
\]
Consequently, Conjecture~4.5 of
\cite{JananthanSimpson} is false.  In fact, it remains false if its
hypothesis $\OO\leT A$ is strengthened to $\OO\ltT A$.
\end{corollary}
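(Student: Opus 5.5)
The first assertion follows directly from the last clause of Theorem~\ref{thm:gap}. Suppose $\OO\leT A\ltT\OO^\OO$ and $A\eqT\HJ_{e_*}(B)$ for some real $B$. Then $\degT(A)$ lies in the range of $\HJ_{e_*}$ and also in the interval $\{\mathbf d:\degT(\OO)\leq\mathbf d<\degT(\OO^\OO)\}$. The theorem says this intersection is empty, so no such $B$ exists. To make the argument self-contained, I would also run the dichotomy explicitly on $B$:
\begin{itemize}
\item[] If $\omega_1^B=\omega_1^{\CK}$, then $\OO\nleT\HJ_{e_*}(B)\eqT A$, which contradicts $\OO\leT A$.
\item[] If $\omega_1^B>\omega_1^{\CK}$, then $\OO^\OO\leT\HJ_{e_*}(B)\eqT A$, which contradicts $A\ltT\OO^\OO$.
\end{itemize}

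To refute Conjecture~4.5, I instantiate it with $e=e_*$ and $A=\OO$. The hypothesis $\OO\leT\OO$ holds trivially. We also have $\OO\ltT\OO^\OO$, by fact (i) with $X=\OO$: there $\OO\ltH\OO^\OO$, so in particular $\OO^\OO\nleT\OO$, while $\OO\leT\OO^\OO$ holds as well. The conjecture would then require some $B$ with $\OO\eqT\HJ_{e_*}(B)$. The first part excludes this, so the conjecture fails.

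For the strengthened hypothesis $\OO\ltT A$, I need one $A$ with $\OO\ltT A\ltT\OO^\OO$. I would take $A=\OO'$.
\begin{itemize}
\item[] $\OO\ltT\OO'$ holds by the jump theorem.
\item[] $\OO'\leT\OO^\OO$ holds by the uniform fact $X\join X'\leT\OO^X$ with $X=\OO$.
\item[] Strictness, i.e.\ $\OO^\OO\nleT\OO'$: since $\OO'\leH\OO$, the reduction $\OO^\OO\leT\OO'$ would give $\OO^\OO\leH\OO$, contradicting fact (i).
\end{itemize}
Applying the first part to $A=\OO'$ then refutes the strengthened form.

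The only step needing any care is exhibiting a degree strictly between $\degT(\OO)$ and $\degT(\OO^\OO)$. The hyperarithmetic-versus-Turing comparison handles it: anything arithmetical (indeed hyperarithmetical) in $\OO$ stays strictly below $\OO^\OO$.
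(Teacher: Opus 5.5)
Your proposal is correct and follows essentially the same route as the paper. You read off the first assertion from the gap theorem, and you refute the strengthened form with $A=\OO'$, using exactly the paper's argument that $\OO'\leH\OO$ forces $\OO^\OO\nleT\OO'$. The only difference is that you also refute the plain conjecture separately with $A=\OO$, whereas the paper lets the single witness $A=\OO'$ do both jobs.
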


\begin{proof}
The first assertion follows immediately from
Theorem~\ref{thm:gap}.  Moreover,
\[
  \OO\ltT\OO'\ltT\OO^\OO.
\]
Here $\OO\ltT\OO'$ is the strictness of the Turing jump, while
$\OO'\leT\OO^\OO$ follows from the uniform completeness of the
hyperjump.  If $\OO^\OO\leT\OO'$, then, since
$\OO'\leH\OO$, we would have $\OO^\OO\leH\OO$, contradicting
$\OO\ltH\OO^\OO$.

Thus, taking $A=\OO'$ gives a counterexample with
$\OO\ltT A$, and in particular refutes Conjecture~4.5.
\end{proof}

\begin{remark}\label{rem:strength}
The use of the even $X'$-column is not needed merely to refute
Conjecture~4.5; it ensures the stronger conclusion
$X\ltT\HJ_{e_*}(X)$ for every $X$.
Thus the conjecture remains false even when restricted to
pseudo-hyperjump operators that strictly increase the Turing
degree of every input.
\end{remark}

\section{The class properties for pseudo-hyperjumps}

We now turn to the consequences of the failure of Conjecture~4.5 for
the associated class-characterisation questions.  We first recall
Properties~4.6--4.8 and then examine their logical relation to
Conjecture~4.5.

Jananthan and Simpson ask for a characterisation of the
$\Sigma^1_1$ classes $K\subseteq 2^\N$ satisfying the following
three properties \cite[Properties 4.6--4.8]{JananthanSimpson}.
We state them explicitly, using $\emptyset$ in place of their $0$.
For columns we use their convention
\[
  (Z)_k=\{n:2^k3^n\in Z\}.
\]

\medskip
\noindent\textbf{Property 4.6.}
Suppose $e\in\N$ and $A$ is a real such that $\OO\leT A$.
Then there exists $B\in K$ such that
\[
  A\eqT\HJ_e(B)\eqT B\join\OO.
\]

\medskip
\noindent\textbf{Property 4.7.}
Suppose $e\in\N$ and $Z$ and $A$ are reals such that
\[
  Z\join\OO\leT A
  \qquad\text{and}\qquad
  \emptyset\ltH(Z)_k
  \quad\text{for every }k\in\N.
\]
Then there exists $B\in K$ such that
\[
  A\eqT\HJ_e(B)\eqT B\join\OO
\]
and
\[
  (Z)_k\nleH B
  \quad\text{for every }k\in\N.
\]

\medskip
\noindent\textbf{Property 4.8.}
Suppose $e\in\N$ and $Z$ and $A$ are reals such that
\[
  Z\join\OO\leT A
  \qquad\text{and}\qquad
  \emptyset\ltH Z.
\]
Then there exists $B\in K$ such that
\[
  A\eqT\HJ_e(B)\eqT B\join Z\eqT B\join\OO.
\]
The logical relation between these properties and Conjecture~4.5 is
worth making explicit.  Each of Properties~4.6--4.8 implies
Conjecture~4.5.

For Property~4.6 this is immediate by forgetting the requirement
that the witness $B$ belong to $K$.  For Property~4.7, given
$e\in\N$ and $A\geT\OO$, let
\[
  Z=\{2^k3^n:k\in\N,\ n\in\OO\}.
\]
Then $Z\eqT\OO$, every column $(Z)_k$ equals $\OO$, and hence
\[
  Z\join\OO\leT A
  \qquad\text{and}\qquad
  \emptyset\ltH(Z)_k
  \quad\text{for every }k.
\]
Thus Property~4.7 yields a $B\in K$ such that
\[
  A\eqT\HJ_e(B)\eqT B\join\OO.
\]
For Property~4.8, given $e\in\N$ and $A\geT\OO$, take $Z=\OO$.
Again its hypotheses are satisfied, and its conclusion contains
\[
  A\eqT\HJ_e(B)\eqT B\join\OO.
\]

Thus the observation of Jananthan and Simpson that, even if
Conjecture~4.5 were true, the class-characterisation questions would
remain open reflects an asymmetry between affirmative and negative answers:
an affirmative answer to Conjecture~4.5 would not by itself characterise the
$\Sigma^1_1$ classes satisfying Properties~4.6--4.8, whereas a
negative answer forces each of the three collections of classes to
be empty.

\begin{corollary}\label{cor:no-classes}
No class $K\subseteq 2^\N$, with or without a definability
assumption, satisfies any one of Properties~4.6, 4.7, or 4.8 of
\cite{JananthanSimpson}.
\end{corollary}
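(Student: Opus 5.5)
The plan is to argue by contradiction and reduce everything to Corollary~\ref{cor:conjecture-false}. Suppose some class $K\subseteq 2^\N$ satisfies one of Properties~4.6, 4.7, or 4.8. By the discussion just before the statement, each of these properties implies Conjecture~4.5. The class $K$ enters only as a constraint on where the witness $B$ lives, and we simply discard that constraint. Since $K$ is never used in any other way, no definability hypothesis on $K$ is needed.

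The key steps, in order, are as follows. Fix the index $e_*$ of Theorem~\ref{thm:gap} and the real $A=\OO$, which satisfies $\OO\leT A$.

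\emph{Property~4.6.} We apply it directly with $e=e_*$ and $A=\OO$. This yields $B\in K$ with $\OO\eqT\HJ_{e_*}(B)$.

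\emph{Property~4.7.} We take $Z=\{2^k3^n:k\in\N,\ n\in\OO\}$. We then check its hypotheses. First, $Z\join\OO\eqT\OO\leT A$. Second, every column satisfies $(Z)_k=\OO$, and $\emptyset\ltH\OO$ by fact~(i) with $X=\emptyset$. The property then yields $B\in K$ with $A\eqT\HJ_{e_*}(B)$; we ignore its extra clause.

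\emph{Property~4.8.} We take $Z=\OO$. The hypotheses $Z\join\OO\leT A$ and $\emptyset\ltH Z$ hold for the same reasons, and the conclusion again contains $A\eqT\HJ_{e_*}(B)$.

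In each of the three cases we have produced a real $B$ with $\degT(\HJ_{e_*}(B))=\degT(\OO)$. But this degree lies in the interval $\{\mathbf d:\degT(\OO)\leq\mathbf d<\degT(\OO^\OO)\}$, because $\OO\ltT\OO^\OO$; this follows from $\OO\ltH\OO^\OO$, fact~(i). Theorem~\ref{thm:gap}, equivalently Corollary~\ref{cor:conjecture-false}, says that no degree in this interval is in the range of $\HJ_{e_*}$, so we have a contradiction.

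There is no real obstacle here. The only point needing care is verifying the side hypotheses of Property~4.7: the column convention $(Z)_k=\{n:2^k3^n\in Z\}$ must recover exactly $\OO$ in each column, and the condition $\emptyset\ltH\OO$ must be justified. Alternatively, one can use $A=\OO'$, as in Corollary~\ref{cor:conjecture-false}, to emphasise that the contradiction persists even under the strengthened hypothesis $\OO\ltT A$.
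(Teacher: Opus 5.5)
Your proposal is correct and follows the paper's proof: you drop the constraint $B\in K$ and reduce Properties~4.7 and~4.8 to Conjecture~4.5 using exactly the paper's choices of $Z$, except that you instantiate directly at $e=e_*$, $A=\OO$ (a case covered by the first assertion of Corollary~\ref{cor:conjecture-false}) rather than citing the conjecture abstractly. One small nit: $\OO\ltH\OO^\OO$ only gives $\OO^\OO\nleT\OO$, and the reduction $\OO\leT\OO^\OO$ comes from the uniform bound $X\leT\OO^X$ instead. You do not actually need that reduction, because $\HJ_{e_*}(B)\eqT\OO$ already forces the second alternative of Theorem~\ref{thm:gap}, which gives the contradiction $\OO^\OO\leT\OO$.
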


\begin{proof}
By the preceding observation, the existence of such a class for any
one of Properties~4.6--4.8 would imply Conjecture~4.5, contradicting
Corollary~\ref{cor:conjecture-false}.
\end{proof}

\section{Conclusion}

We have shown that pseudo-hyperjump inversion fails at the level of
Turing degrees.  The failure is witnessed by a single pseudo-hyperjump
$\HJ_{e_*}$ which strictly increases the Turing degree of every input,
while its range omits every degree in the interval
\[
  \{\mathbf d:
    \degT(\OO)\leq\mathbf d<\degT(\OO^\OO)\}.
\]
Thus the obstruction is stronger than the failure of inversion at the
single target $\OO$: since
\[
  \OO\ltT\OO'\ltT\OO^\OO,
\]
Conjecture~4.5 remains false even when its target is required to lie
strictly above $\OO$.

The construction is obtained by using the predicate
$\omega_1^X=\omega_1^{\CK}$ as a uniform effective switch.  On one
side of the dichotomy the resulting pseudo-hyperjump has the Turing
degree of $X'$, while on the other it has the degree of $\OO^X$.
This produces the omitted interval while preserving strict
Turing-degree increase.  Finally, because each of
Properties~4.6--4.8 implies Conjecture~4.5, the negative solution also
settles the corresponding class-characterisation questions: no class
of reals, with or without a definability assumption, satisfies any one
of those properties.

\section*{Acknowledgements}
The main results were discovered by ChatGPT (OpenAI).  The author takes
full responsibility for the mathematical content of the paper.

\end{document}